\def\red{\text{\rm red}}
\newcommand{\sr}{\hat{r}}
\let\eps=\varepsilon
\def\ext{{\rm ext}}
\newtheorem{theorem}{Theorem}
\newtheorem{lemma}[theorem]{Lemma}
\newtheorem{corollary}[theorem]{Corollary}
\newtheorem{claim}[theorem]{Claim}
\newtheorem{definition}[theorem]{Definition}
\newtheorem{fact}[theorem]{Fact}
\begin{document}

\begin{frontmatter}[classification=text]

\title{The Size-Ramsey Number of $3$-uniform Tight Paths} 

\author[jie]{Jie Han\thanks{Partially supported by the Simons Collaboration Grant for Mathematicians \#630884. }}
\author[yoshi]{Yoshiharu Kohayakawa\thanks{Partially supported by CNPq (311412/2018-1, 423833/2018-9) and FAPESP (2018/04876-1, 2019/13364-7)}}
\author[shoham]{Shoham Letzter\thanks{Research supported by the Royal Society.}}
\author[gui]{Guilherme Oliveira Mota\thanks{Partially supported by CNPq (304733/2017-2, 428385/2018-4) and FAPESP (2018/04876-1, 2019/13364-7).}}
\author[olaf]{Olaf Parczyk\thanks{Partially supported by Technische Universit\"at Ilmenau, the Carl Zeiss Foundation, and the DFG (Grant PA 3513/1-1).\newline
		The cooperation of the authors was supported by a joint CAPES/DAAD
		PROBRAL project (Proj.~430/15, 57350402, 57391197).
		This study was financed in part by CAPES, Coordena\c{c}\~ao de
		Aperfei\c{c}oamento de Pessoal de N\'ivel Superior, Brazil, Finance
		Code~001.
		FAPESP is the S\~ao Paulo Research Foundation.  CNPq is the National
		Council for Scientific and Technological Development of
		Brazil.}}

\begin{abstract}
Given a hypergraph $H$, the size-Ramsey number $\hat{r}_2(H)$ is the smallest
integer $m$ such that there exists a hypergraph $G$ with $m$ edges with the property
that in any colouring of the edges of $G$ with two colours there is a
monochromatic copy of $H$.
We prove that the size-Ramsey number of the
$3$-uniform tight path on $n$ vertices $P^{(3)}_n$ is linear in $n$,
i.e., $\hat{r}_2(P^{(3)}_n) = O(n)$.
This answers a question by Dudek, La Fleur, Mubayi, and R\"odl for
$3$-uniform hypergraphs~[\emph{On the 
	size-Ramsey number of hypergraphs}, J.~Graph Theory \textbf{86} (2016),
417--434], who proved $\hat{r}_2(P^{(3)}_n) = O(n^{3/2} \log^{3/2} n)$.
\end{abstract}
\end{frontmatter}

\section{Introduction}

For hypergraphs $G$ and $H$ and an integer $s$, we denote by $G \rightarrow
(H)_s$ the property that in any $s$-colouring of the edges of $G$ there is a
monochromatic copy of $H$.
The \emph{$s$-colour} size-Ramsey number $\hat{r}_s(H)$ is
\begin{align*}
\sr_s(H) := \min \{ |E(G)| : G \rightarrow (H)_s \}.
\end{align*}

For the $n$-vertex path $P_n$, Erd\H{o}s~\cite{erdHos1981combinatorial} asked if $\sr_2(P_n)=O(n)$, which was
answered positively by Beck~\cite{Be83} using the probabilistic method.
An explicit construction for the same results was given by Alon and
Chung~\cite{AlCh88}.
Many successive improvements led to the currently best known bounds $3.75n -
o(n)\leq\sr_2(P_n)\leq 74n$ (see, e.g.,
\cite{Be83, bollobas1986extremal, dudek2017some,BalDeBiasio} for lower
bounds, and \cite{Be83, DuPr15, letzter16:_path_ramsey, dudek2017some} for
upper bounds).
For $s\geq 2$ colours, Dudek and Pra{\l}at~\cite{dudek2017some} and
Krivelevich~\cite{krivelevich2017long} proved that there are constants $c$ and
$C$ such that $cs^2n\leq\sr_s(P_n)\leq Cs^2(\log s)n$.

The systematic investigation of size-Ramsey questions for hypergraphs was
initiated by Dudek, La Fleur, Mubayi, and R\"odl~\cite{DFMR_sizeRamsey}.
Besides cliques and trees, they studied generalisations of paths.

We say that an $r$-uniform hypergraph is an \emph{$\ell$-path} if
there exists an ordering of its vertices such that every edge is
composed of $r$ consecutive vertices, two (vertex-wise) consecutive
edges share exactly $\ell$ vertices, and every vertex is contained in
an edge. 
For $1 \le \ell \le r-1$, let $P^{(r)}_{n,\ell}$ denote the \emph{$r$-uniform $\ell$-path} on $n$
vertices and for the \emph{tight path}, where $\ell=r-1$, we write $P^{(r)}_{n}$.
Dudek, La Fleur, Mubayi, and R\"odl~\cite{DFMR_sizeRamsey} deduced from Beck's result~\cite{Be83} that $\hat{r}_2(P^{(r)}_{n,\ell}) = O(n)$, when $1 \le
\ell \le r/2$.
Furthermore, they proved that $\hat{r}_2(P^{(r)}_{n})
= O_r(n^{r-1-\alpha} \log^{1+ \alpha}n)$ with $\alpha=(r-2)/(\binom{r-1}{2}+1)$, which
gives $\hat{r}_2(P^{(3)}_{n}) = O(n^{3/2} \log^{3/2}n)$.

This was improved and extended to more colours by Lu and
Wang~\cite{lu2018size}, who showed that $\hat{r}_s(P^{(r)}_{n}) = O_r(s^r (n \log
n)^{r/2})$ for $s \ge 2$ colours.
Dudek, La Fleur, Mubayi, and R\"odl~\cite{DFMR_sizeRamsey} asked if $\hat{r}_2(P^{(r)}_n)=O_r(n)$ for $r \ge 3$.
We answer this question for $3$-uniform hypergraphs by proving the following result.

\begin{theorem}
	\label{thm:main}
	The $2$-colour size-Ramsey number of the $3$-uniform tight path is
	\begin{align*}
	\sr_2(P^{(3)}_n) = O(n).	
	\end{align*}
\end{theorem}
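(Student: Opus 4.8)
The plan is to use a structured host hypergraph rather than a random one: a purely random $3$-uniform hypergraph cannot beat an $n^{3/2}$-type barrier, because forcing enough pairs of vertices to lie in a common hyperedge already costs $\Omega(n^{3/2})$ hyperedges, which is exactly the obstruction behind the earlier bound $O(n^{3/2}\log^{3/2}n)$. So I would build $G$ from an auxiliary graph in which the ``pair structure'' comes for free. Fix a large constant $C$, let $\Gamma$ be a bounded-degree pseudorandom graph on $N = Cn$ vertices (for concreteness, $G(Cn, c/n)$ for a suitable constant $c$, or a bounded-degree expander, possibly after a constant blow-up), and let $G$ be the $3$-uniform hypergraph on $V(\Gamma)$ whose edges are the \emph{cherries} of $\Gamma$, i.e.\ the triples $\{a,b,c\}$ with $ab,bc\in E(\Gamma)$ (the ``centre'' $b$ being essentially unique, since $\Gamma$ is locally almost linear). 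As $\Gamma$ has bounded degree and $\Theta(n)$ vertices, $|E(G)| = \sum_v \binom{\deg_\Gamma(v)}{2} = O(n)$, which is the edge bound we want. Moreover $G \supseteq P^{(3)}_n$: any path $v_1\dots v_n$ in $\Gamma$ yields a tight path in $G$ through the cherries $\{v_i,v_{i+1},v_{i+2}\}$ centred at $v_{i+1}$, consecutive ones sharing the pair $\{v_{i+1},v_{i+2}\}$, and $\Gamma$ trivially contains a path on $n$ vertices.

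Given a $2$-colouring of $E(G)$, I would first pass to a local colouring of ``turns''. A colour on the cherry $\{a,b,c\}$ centred at $b$ is a colour on the pair of incident edges $\{ba,bc\}$ at $b$, so at each vertex $v$ we get a red/blue colouring of the (almost) complete ``turn graph'' on $N_\Gamma(v)$. Two elementary facts drive the reduction: the complement of a disconnected graph on at least two vertices is connected, and a graph containing at least half of all pairs has a subgraph of minimum degree at least a quarter of the average degree; combined with Ramsey's theorem (note that $\deg_\Gamma(v)$ is a large constant), we obtain for each $v$ a colour $\chi_v$ and a set $S_v \subseteq N_\Gamma(v)$ of constant size all of whose internal turns have colour $\chi_v$. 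Passing to the majority colour, say red, there is a set $V_R$ of at least $N/2$ vertices equipped with such red sets $S_v$. Any path $w_1\dots w_n$ all of whose internal vertices lie in $V_R$, and with the two path-neighbours of each internal vertex $w_i$ lying in $S_{w_i}$, then lifts to a monochromatic red tight path on $n$ vertices in $G$.

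It remains to find such a turn-respecting path on $n$ vertices inside $\Gamma[V_R]$. Here I would use that a suitably pseudorandom $\Gamma$ enjoys a \emph{robust} expansion property: after deleting any half of the vertices and imposing at each remaining vertex an arbitrary $O(1)$-branching transition constraint, the family of ``allowed'' walks still expands by a constant factor on all not-too-large vertex sets. Granting this, a P\'osa-type rotation--extension argument, carried out so that every extension and every rotation creates only red turns (which is possible because the red turn structure at each vertex of $V_R$ gives total freedom inside $S_v$), produces a turn-respecting path of length $\Theta(N)\ge n$, hence a monochromatic copy of $P^{(3)}_n$ in $G$.

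I expect the last step to be the crux. Since $|E(G)| = O(n)$ forces $\Gamma$ to have bounded average degree, the host is about as sparse as possible (constant average hyperdegree) and there is essentially no slack: $\Gamma$ must be chosen so that its expansion survives \emph{simultaneously} the adversarial deletion of half its vertices and the adversarial constant-branching transition constraints produced by the colouring, and the rotation--extension must be driven while keeping every turn a single colour, which entangles the ``where can I rotate'' combinatorics with the structure of the turn graphs. (The blow-up structure is useful precisely here, supplying enough parallelism to choose the sets $S_v$ coherently even though the adversary controls the turn colours.) Formulating the correct robust-expansion statement for $\Gamma$ and running the colour-constrained P\'osa argument on top of it is the technical heart of the proof; the construction of $G$ and the reduction to turn-colourings are comparatively soft.
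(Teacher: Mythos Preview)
Your host construction (cherries of a bounded-degree expander~$\Gamma$, possibly after a constant blow-up) and the reduction to ``turn-respecting paths'' are natural, but the step you correctly flag as the crux is not just technically delicate --- as stated it is a genuine gap. After the reduction you are looking for a path $w_1\dots w_n$ in~$\Gamma$ all of whose edges lie in the auxiliary graph $\Gamma^*$ with $vw\in E(\Gamma^*)$ iff $w\in S_v$ and $v\in S_w$. Since each $S_v$ has constant size, $\Gamma^*$ has bounded maximum degree; but the adversary, through the colouring of turns, controls which monochromatic cliques are available at each vertex and hence which constant-degree subgraphs can arise as~$\Gamma^*$. You do not explain how a blow-up lets you ``choose the $S_v$ coherently'', and P\'osa rotation--extension needs expansion of the \emph{allowed} graph $\Gamma^*$, not of~$\Gamma$; nothing you have set up forces $\Gamma^*$ to expand, or even to have linearly many edges. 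In short, the proposal relocates the whole difficulty into a ``robust expansion under adversarial transition constraints'' statement that you neither formulate precisely nor prove, and it is not clear such a statement is true for any bounded-degree~$\Gamma$.

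For comparison, the paper avoids this obstacle by using a richer host: triples are triangles in $G^k(t')$, a constant blow-up of the $k$th power of a bounded-degree graph~$G$ satisfying a bipartite density condition. The $t'$-blow-up guarantees a monochromatic $K_t^{(3)}$ inside each cluster; a $(2,3)$-graph $F$ on the clusters records the presence of blue ``connectors'' (short tight path pieces) between clusters. If $F$ has a long $(2,3)$-path one reads off a blue~$P_n^{(3)}$. Otherwise a DFS-based lemma on $(2,3)$-graphs produces $k{+}1$ large cluster sets with no transversal $F$-edges; property (P2$_m$) of~$G$ then yields a $G$-path alternating through them, and along its $k$th power the absence of blue connectors lets one greedily build a red~$P_n^{(3)}$ inside the clusters. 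The work that your ``colour-constrained P\'osa'' would have to do is replaced by this two-case DFS argument together with a local extension lemma (their Fact~6); the price is a more elaborate host, but every step is fully justified.
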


Trivially, we need at least $n$ edges, so this is asymptotically optimal.
As observed in~\cite{DFMR_sizeRamsey}, bounds on size-Ramsey numbers 
for some uniformity can be used to obtain bounds for larger uniformities.
We obtain the following corollary.

\begin{corollary}
	For any integer $r$ such that $3\mid r$, the $2$-colour size-Ramsey
	number of the $r$-uniform $(2r/3)$-path is
	\begin{align*}
	\sr_2(P^{(r)}_{n,2r/3}) = O(n).
	\end{align*} 
\end{corollary}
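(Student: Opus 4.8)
The plan is to deduce the corollary from Theorem~\ref{thm:main} by a vertex blow-up. Fix an integer $r$ with $3 \mid r$ and set $t = r/3$; since an $r$-uniform $(2r/3)$-path exists only on a number of vertices divisible by $r - 2r/3 = t$, we may write $n = mt$. By Theorem~\ref{thm:main} there is a $3$-uniform hypergraph $G$ with $|E(G)| = \hat{r}_2(P^{(3)}_m) = O(m)$ and $G \to (P^{(3)}_m)_2$; as $r$ is fixed, $O(m) = O(3n/r) = O(n)$. From $G$ I would build an $r$-uniform hypergraph $G'$ by replacing each vertex $v \in V(G)$ with a block $B_v$ of $t$ fresh vertices, the blocks pairwise disjoint, and replacing each edge $\{a,b,c\} \in E(G)$ with the $r$-element set $B_a \cup B_b \cup B_c$, which we declare to be an edge of $G'$. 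This yields a bijection between $E(G)$ and $E(G')$, so $|E(G')| = |E(G)| = O(n)$, and it remains to show $G' \to (P^{(r)}_{n,2r/3})_2$.

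To that end, I would take any $2$-colouring $\chi'$ of $E(G')$ and transfer it along the above bijection to a $2$-colouring $\chi$ of $E(G)$. By the choice of $G$ there is a $\chi$-monochromatic copy of $P^{(3)}_m$ in $G$, with vertices ordered $w_1, \dots, w_m$ and edges $\{w_i, w_{i+1}, w_{i+2}\}$ for $1 \le i \le m-2$. Ordering the vertices of $\bigcup_{i=1}^m B_{w_i}$ by concatenating arbitrary orderings of $B_{w_1}, B_{w_2}, \dots, B_{w_m}$ in this sequence, each set $B_{w_i} \cup B_{w_{i+1}} \cup B_{w_{i+2}}$ is an edge of $G'$ consisting of $r$ consecutive vertices; consecutive such edges overlap in the $2r/3$ consecutive vertices of $B_{w_{i+1}} \cup B_{w_{i+2}}$; and every vertex lies in some such edge (using $m \ge 3$). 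Hence these edges span a copy of $P^{(r)}_{mt,2r/3} = P^{(r)}_{n,2r/3}$ in $G'$, monochromatic in $\chi'$ because each of its edges is the image under the bijection of an edge of the $\chi$-monochromatic $P^{(3)}_m$. This gives $G' \to (P^{(r)}_{n,2r/3})_2$ and hence the corollary.

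I do not expect any genuine obstacle here: the whole argument is a clean reduction to Theorem~\ref{thm:main}. The only points that call for a little care are the divisibility requirement $t \mid n$ --- without which $P^{(r)}_{n,2r/3}$ would not be defined --- and the routine check that, under the block ordering, the blow-up of the $3$-uniform tight path $P^{(3)}_m$ is exactly the $r$-uniform $(2r/3)$-path on $mt$ vertices.
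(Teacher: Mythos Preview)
Your argument is correct and is precisely the approach the paper takes: blow up each vertex of the $3$-uniform host hypergraph from Theorem~\ref{thm:main} into $r/3$ vertices, so that $3$-edges become $r$-edges and a monochromatic $P^{(3)}_m$ becomes a monochromatic $P^{(r)}_{n,2r/3}$. Your write-up just spells out the details (the edge bijection, the block ordering, and the divisibility $t\mid n$) more carefully than the paper's one-line sketch.
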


To see this, take the graph given by 
Theorem~\ref{thm:main} and replace every vertex by 
a set of $r/3$ vertices. Then each $3$-edge naturally 
gives an $r$-edge, and every $3$-uniform tight path 
becomes an $r$-uniform $(2r/3)$-path.

Our proof combines new ideas and the method developed by Clemens, Jenssen, Kohayakawa,
Morrison, Mota, Reding, and Roberts~\cite{2colourSizeRamsey} for estimating the 
size-Ramsey number of powers of paths (see also~\cite{BKMMMMP,CYKMMRB}).
It is plausible that ideas from~\cite{CYKMMRB, BKMMMMP} may provide a strategy to solve the case with $s \ge 3$ colours.
However, the question whether the size-Ramsey number of a tight path is linear for hypergraphs with uniformity $r\ge 4$ remains open and requires additional ideas.

\section{Preliminaries}
\label{sec:prelims}

In this short section, we give a sketch of our proof of
Theorem~\ref{thm:main} and state two simple lemmas about random graphs. 

\subsection{Sketch of the proof of Theorem~\ref{thm:main}}

We will first sketch a proof for $\sr_2(P_n) = O(n)$.  It is not hard
(cf.~Lemmas~\ref{lem:Hexists} and~\ref{lem:H2} below) to obtain a
graph $G$ with $O(n)$ edges such that for any two sufficiently large
and disjoint sets of vertices $A$ and $B$ there is a path of
length~$n$ alternating between $A$ and $B$.  Given such a graph $G$,
we show that 
$G \rightarrow (P_n)_2$.  Consider an arbitrary $2$-colouring of the
edges of $G$ with colours blue and red.  
If there is no blue $P_n$ in $G$ we can show 
(cf.~Lemma~\ref{lem:transversal} below) that there are two
sets $A$ and $B$ of size at least $n$ with no blue edges in between.
By the property of~$G$ mentioned above there exists a $P_n$
alternating between $A$ and $B$, which unequivocally has to be red.


For the proof of Theorem~\ref{thm:main} we follow, in principle, the
same strategy.  Based on a blow-up of a power of a similar graph $G$,
we define a $3$-uniform hypergraph $H$ and claim that
$H \rightarrow (P^{(3)}_{n})_2$.  
We define an auxiliary (generalised)
graph $F$ on $V(G)$, which has $2$- and $3$-edges, such that a long 
path in $F$ gives a blue $P^{(3)}_n$ in $H$.
If~$F$ does not contain a long path, then we find a family of disjoint
sets such that no edge of $F$ lies between these sets
(cf.~Lemma~\ref{lem:transversal}).  Then by the properties of $G$
there exists a path in $G$ alternating through these sets.  As there
are no edges of $F$ `interfering' with this path, we are able to turn
it into a red $P^{(3)}_n$ in~$H$.

In the next section we provide the lemmas needed to obtain $G$.
Afterwards, in Section~\ref{23-graphs} we introduce the notion of
$(2,3)$-graphs, which, as can be seen above, plays a key role in
our argument.  Finally, we prove Theorem~\ref{thm:main} in
Section~\ref{sec:proof}.

\subsection{Sparse graphs with many long paths}
\label{sec:graph_G}

The following two lemmas are proved in~\cite{2colourSizeRamsey}.
Basically, together they imply that for every $k$ and $n$ there exists
a graph $G$ with $O_k(n)$ edges such that, for any disjoint sets of
vertices $A_1,\dots,A_{k+1}$ that are large enough, there exists a
path of length~$n$ `alternating' through these sets.

\begin{lemma}[{\cite[{Lemma 3.1}]{2colourSizeRamsey}}]
	\label{lem:Hexists}
	For every pair of positive constants $\varepsilon$ and $a$, there is a
	constant~$b$ such that, for any large enough~$n$, there is a graph
	$H$ with $v(H)=an$ and $\Delta(H)\leq b$ such that the following holds:
	\begin{enumerate}[label=\upshape(P\arabic*$_n$)]
		\item For every pair of disjoint sets $S, T\subseteq V(H)$ with $|S|, |T|\geq\varepsilon n$, we get $e_H(S,T)>0$\label{prop:exist}.
	\end{enumerate}
\end{lemma}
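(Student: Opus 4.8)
The plan is to take $H$ to be a trimmed binomial random graph. Fix $\varepsilon$ and $a$, put $N := an$, let $C = C(\varepsilon,a)$ be a large constant to be chosen, and consider $G \sim G(N,p)$ with $p := C/N$, so that every vertex has expected degree about $C$. I will show that with probability bounded away from $0$ the graph $G$ simultaneously has two properties: every pair of disjoint vertex sets of size $\lceil \varepsilon n/2\rceil$ spans an edge (call this $(\star)$), and only few vertices have degree larger than some constant $b = b(\varepsilon,a)$. Deleting every edge incident to a high-degree vertex then produces the desired $H$: the maximum degree drops to at most $b$, while the factor-$2$ slack in $(\star)$ ensures the edges guaranteed between the prescribed sets of size $\varepsilon n$ survive the deletion.

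For property $(\star)$, fix disjoint $S,T$ with $|S|=|T|=s:=\lceil \varepsilon n/2\rceil$ and estimate $\Pr[e_G(S,T)=0] = (1-p)^{s^2} \le \exp(-ps^2) \le \exp(-C\varepsilon^2 n/(4a))$. Since there are at most $4^N = \exp(an\ln 4)$ such pairs, a union bound bounds the probability that $(\star)$ fails by $\exp(an\ln 4 - C\varepsilon^2 n/(4a))$, which tends to $0$ as soon as $C > 4a^2(\ln 4)/\varepsilon^2$; having $(\star)$ for sets of size $s$ of course gives it for all larger disjoint sets. For the degree control, let $B$ be the set of vertices of degree greater than $b$. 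As $\deg_G(v)$ is dominated by $\mathrm{Bin}(N,p)$ with mean at most $C$, a first-moment bound gives $\Pr[v\in B] \le (eC/b)^{b} \le 2^{-b}$ whenever $b \ge 2eC$, so $\sum_{v} \Pr[v\in B] \le N2^{-b}$; choosing in addition $b$ large enough that $2^{b} \ge 8a/\varepsilon$ makes this at most $\varepsilon n/8$, and Markov's inequality yields $\Pr[|B| > \varepsilon n/2] \le 1/4$.

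Combining the two estimates, for every large enough $n$ there is an outcome $G$ on $N=an$ vertices for which $(\star)$ holds and $|B| \le \varepsilon n/2$. Let $H$ be obtained from this $G$ by deleting every edge meeting $B$; then $v(H)=an$ and $\Delta(H)\le b$ (the vertices of $B$ are now isolated, the rest retain degree at most $b$). Given disjoint $S,T\subseteq V(H)$ with $|S|,|T|\ge \varepsilon n$, the sets $S\setminus B$ and $T\setminus B$ each have size at least $\varepsilon n - \varepsilon n/2 = \varepsilon n/2 \ge s$ (up to rounding), so by $(\star)$ they span an edge of $G$; this edge avoids $B$, hence lies in $H$, and therefore $e_H(S,T) > 0$, which is property~\ref{prop:exist}.

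The rest is routine, and the only genuine point to get right is the tension between the two demands on the degree parameter: the union bound in $(\star)$ only closes once the average degree $C$ — and hence the admissible $b \ge 2eC$ — is of order $a^2/\varepsilon^2$, which is precisely why the lemma permits $b$ to depend on $\varepsilon$ and $a$; and one must keep enough slack in $(\star)$ (proving it for sets of size $\varepsilon n/2$ rather than $\varepsilon n$) so that discarding the at most $\varepsilon n/2$ high-degree vertices cannot destroy the guaranteed edges between the given sets. One could instead take $H$ to be a random $b$-regular graph, which is an expander for $b$ a large enough constant and needs no trimming, at the cost of a slightly more delicate probabilistic estimate.
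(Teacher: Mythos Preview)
The paper does not prove this lemma; it is quoted verbatim from~\cite{2colourSizeRamsey} and used as a black box. Your argument is correct and is the standard one: take $G(an,C/n)$ for a sufficiently large constant~$C$, show by a union bound that any two disjoint sets of size $\varepsilon n/2$ span an edge, bound the number of vertices of degree above a constant~$b$ by a first-moment/Markov argument, and delete the edges at those few high-degree vertices. The slack (proving the edge property for sets of half the required size) is exactly what is needed so that the trimming cannot destroy the guaranteed edge, and your quantitative dependencies $C\asymp a^2/\varepsilon^2$ and $b\ge 2eC$ are the right ones. This is, up to cosmetic choices, the same proof one finds in~\cite{2colourSizeRamsey}; your closing remark that a random $b$-regular graph would also work (avoiding the trimming step) is a legitimate alternative.
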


\begin{lemma}[{\cite[{Lemma 3.5}]{2colourSizeRamsey}}]
	\label{lem:H2}
	For every integer $k\geq 1$ and every $\eps>0$ there exists an
	integer $a$ such that the following holds.  Let $H$ be a graph on at
	least $am$ vertices such that for every pair of disjoint sets $S$,
	$T\subseteq V(H)$ with $|S|$, $|T|\geq \eps m$ we have
	$e_{H}(S,T)> 0$.  Then the following holds:
	\begin{enumerate}[label=\upshape(P\arabic*$_m$)]
		\setcounter{enumi}{1}
		\item For every family $A_1,\dots,A_{k+1}\subseteq V(H)$ of pairwise
		disjoint sets each of size at least $\eps a m$, there is a path
		$P_m=(x_1,\dots,x_m)$ in~$H$ with $x_i \in A_j$ for
		all~$1\leq i\leq m$, where $j\equiv i\pmod{k+1}$.\label{prop:path} 
	\end{enumerate}
\end{lemma}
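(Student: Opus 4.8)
The plan is to avoid greedy extensions and rotations entirely and instead find the path via a depth‑first search in an auxiliary orientation of part of~$H$. Fix $a$ large in terms of $\eps$ and $k$ (for instance $a\ge 3+\lceil 1/\eps\rceil$ will suffice for the inequalities below), read all indices of the $A_i$ modulo $k+1$ (so $A_{k+2}=A_1$), and let $\vec H$ be the digraph on $A_1\cup\dots\cup A_{k+1}$ obtained by orienting, for each $r\in\{1,\dots,k+1\}$ and each edge of $H$ joining $A_r$ to $A_{r+1}$, that edge from its endpoint in $A_r$ towards its endpoint in $A_{r+1}$. If $x_1\to x_2\to\cdots$ is a directed path in $\vec H$ with $x_1\in A_1$, then automatically $x_i\in A_j$ with $j\equiv i\pmod{k+1}$, so it suffices to produce a directed path in $\vec H$ on $m$ vertices starting in $A_1$. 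To pin down the starting class, add a new vertex $\sigma$ with an arc $\sigma\to v$ for every $v\in A_1$, forming $\vec H^{+}$; we then want a directed path from $\sigma$ in $\vec H^{+}$ on at least $m+1$ vertices.

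Run a depth‑first search of $\vec H^{+}$ from $\sigma$. Throughout the run, $A_1\cup\dots\cup A_{k+1}$ is partitioned into the set $D$ of vertices already popped off the stack, the set $S$ of non‑$\sigma$ vertices currently on the stack, and the set $U$ of vertices not yet touched; the stack reads $\sigma$ followed by a directed path of $\vec H$ with vertex set $S$ whose first vertex lies in $A_1$. The key DFS invariant is that $\vec H$ has no arc from $D$ to $U$: once a vertex is popped all its out‑neighbours lie in $D\cup S$, and $U$ only loses vertices. Assume, for contradiction, that the stack never has more than $m$ vertices, i.e.\ $|S|\le m-1$ at all times. Since $\sigma$ reaches all of $A_1$ and, by the expansion hypothesis applied between consecutive classes, then all but fewer than $\eps m$ vertices of each $A_r$, the quantity $|D|-|U|$ starts at $-\bigl(|A_1|+\dots+|A_{k+1}|\bigr)$, ends strictly positive, and increases by exactly $1$ at each push or pop of an $A_i$‑vertex; hence at some moment $|D|=|U|$, and then
\[
|D|=|U|=\tfrac12\bigl(|A_1|+\dots+|A_{k+1}|-|S|\bigr)\ \ge\ \tfrac12\bigl((k+1)\eps am-m\bigr)\ \ge\ (k+1)\eps m .
\]

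Finally, combine this with the invariant. The sets $D\cap A_r$ and $U\cap A_{r+1}$ are disjoint, so if both have size at least $\eps m$ then $H$ has an edge between them, i.e.\ $\vec H$ has an arc from $D$ to $U$ — impossible. Thus for every $r$ either $|D\cap A_r|<\eps m$ or $|U\cap A_{r+1}|<\eps m$. If $|D\cap A_r|<\eps m$ for all $r$, then $|D|<(k+1)\eps m$, contradicting the displayed bound. Otherwise $|D\cap A_{r_0}|\ge\eps m$ for some $r_0$; then $|U\cap A_{r_0+1}|<\eps m$, so $|D\cap A_{r_0+1}|\ge|A_{r_0+1}|-|S|-\eps m\ge\eps am-m-\eps m\ge\eps m$, and iterating around the cyclic order of the classes yields $|D\cap A_r|\ge\eps m$, hence $|U\cap A_r|<\eps m$, for every $r$; so $|U|<(k+1)\eps m$, again contradicting the displayed bound. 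Therefore the stack contains $m+1$ vertices at some moment, and deleting $\sigma$ and keeping the first $m$ of the remaining stack vertices gives a path $x_1,\dots,x_m$ in~$H$ with $x_i\in A_j$ whenever $j\equiv i\pmod{k+1}$.

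I do not foresee a serious obstacle: the part that needs the most care is the routine DFS bookkeeping — that the stack is genuinely a directed path starting in $A_1$, that $|D|-|U|$ is monotone so the balanced moment exists, and that $\sigma$ does reach all but a few vertices of each $A_r$ — together with fixing the constant $a$ (in terms of $\eps$ and $k$) large enough that all the inequalities above hold simultaneously.
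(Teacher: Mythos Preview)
Your DFS argument is correct: the invariant that there is no arc from $D$ to $U$, the discrete intermediate-value step giving a balanced moment $|D|=|U|$, and the cyclic propagation of $|D\cap A_r|\ge\eps m$ around the classes all go through with your choice $a\ge 3+\lceil 1/\eps\rceil$. The only places that need a word more are exactly the ones you flag at the end (that the reached set contains all but fewer than $\eps m$ vertices of each $A_r$, and that the stack below $\sigma$ is always a directed path beginning in $A_1$); both are routine.

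As for comparison: this paper does not supply its own proof of the lemma --- it is quoted verbatim as Lemma~3.5 of \cite{2colourSizeRamsey} and used as a black box --- so there is no in-paper argument to set yours against. It is worth noting, though, that your DFS-with-a-balanced-moment idea is very much in the spirit of the DFS machinery the present paper \emph{does} develop for $(2,3)$-graphs in Section~\ref{23-graphs}; in that sense your approach is a natural fit with the paper's toolkit.
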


Note that the hypothesis on~$H$ in Lemma~\ref{lem:H2}
is~\hyperref[prop:exist]{\upshape(P1$_m$)} from Lemma~\ref{lem:Hexists}.  Therefore, roughly
speaking, Lemma~\ref{lem:H2} tells us that~\hyperref[prop:exist]{\upshape(P1$_m$)}
implies~\ref{prop:path}.

\section{\texorpdfstring{$(2,3)$}{(2,3)}-graphs}
\label{23-graphs}

In this section we introduce a structure that helps us to transfer
some ideas from the graph case to the hypergraphs setting.  A
\emph{$(2,3)$-graph} $F=(V,E)$ consists of a set of vertices $V$ and a
set $E$ of \emph{$2$-edges} of the form $\{u,v\}$ and \emph{$3$-edges} of
the form $(\{u,v\} ,w)$, for distinct vertices $u,v,w\in V$. For simplicity
we will write~$uv$ for~$\{u,v\}$ and~$uv(w)$  for~$(\{u,v\}, w)$.
A sequence of vertices $P =(x_1, \dots, x_m)$ is a $(2,3)$-path of length $m$ in $F$ if for
every $i=1,\dots,m-1$ either $x_ix_{i+1}\in E$ or
$x_ix_{i+1}(w_i)\in E$ for some~$w_i\in V\setminus\{x_1,\dots,x_m\}$,
with all the~$w_i$ distinct.

Given pairwise disjoint sets $V_1,\dots,V_{k+1}$, we say that an edge
$uv \in E(F)$ ($uv(w) \in E(F)$) is a \emph{transversal with respect to
	$V_1,\dots,V_{k+1}$}, if $u$ and $v$ ($u$, $v$, and $w$) are in different sets $V_i$.
When the sets $V_1,\dots,V_{k+1}$ are clear from the context we say that the edge is a
\emph{transversal}.

We want to prove that if a sufficiently large $(2,3)$-graph $F=(V,E)$ contains no $(2,3)$-path with $n$ vertices, then there exist large disjoint sets $V_1,\dots,V_{k} \subseteq V$  such that $E$ contains no transversals and that there is no edge $uv(w)$ with $u \in V_1 \cup \dots \cup V_{k-1}$ and $v,w \in V_{k}$.
The last property is only required to support our inductive proof.
To prove this we use a \emph{Depth First
	Search (DFS)} algorithm.  For example, Ben-Eliezer, Krivelevich, and
Sudakov~\cite{ben-eliezer12:_Ramsey} used a DFS algorithm to find long
paths in expanding graphs to obtain bounds on the size-Ramsey number
of directed paths.  Their algorithm traverses the vertices of the input
graph and maintains a set $S$ of vertices that are fully dealt with,
a set $U$ of currently active vertices, and a set $J$ of vertices that
were not considered so far.  The set $U$ always spans a path and in
every step, if at all possible, this path is extended by adding a vertex from $J$.
Otherwise, the last vertex of the path is removed and added to $S$.
It is immediate that there cannot be any edges between $S$ and $J$ and if $U$ stays small, then at 
some point during the execution both $S$ and $J$ are large.

We adapt this algorithm to the setting of $(2,3)$-graphs (see Algorithm~\ref{algo:DFS} below).
As in the graph case we greedily extend a $(2,3)$-path (preferring $2$-edges over $3$-edges)
and backtrack if it gets stuck. We will now give the details of our algorithm. The input is a
$(2,3)$-graph $F=(V,E)$, disjoint subsets of vertices $V_1,\dots,V_k$, and an ordering of the
vertices $V=\{ v_1,\dots,v_N \}$.  During the algorithm we maintain
sets $S$, $T$, $W_S$, $W_U$,~$T_i$ for $i\in[k]$ and a $(2,3)$-path~$U$ as follows:
\begin{itemize}
	\item $S \subseteq V'$ is the set of vertices that are fully dealt
	with. 
	\item $W_S \subseteq V$ is the set of vertices $w$ that were `used' by
	vertices from $S$. 
	\item $U$ contains the currently active vertices in a $(2,3)$-path. 
	\item $W_U \subseteq V$ is the set of vertices $w$ that are `used' by
	the path $U$.
	\item $T_1\cup\dots\cup T_k$ are disjoint and $T_i\subset V_i$ for $i\in[k]$.
\end{itemize}
In every step of the algorithm, either the $(2,3)$-path $U$ is
extended by adding a vertex from $T_k$ to it or this is not possible,
and the last vertex from $U$ is removed and put into $S$.  While the
algorithm runs, after each execution of the while loop, we have the
following invariants, where $m$ is the length of the $(2,3)$-path
$U$:
\begin{enumerate}[label=\upshape(A\arabic*)]
	\item \label{prop:Upath} $U=(u_1,\dots,u_{m})$ is a $(2,3)$-path and
	$W_U$ is the set of the vertices~$w$ in the edges~$u_iu_{i+1}(w)$
	($1\leq i<m$) in the $(2,3)$-path $U$.

	\item \label{prop:Vcovered} $S$, $U\subseteq V_k$, $W_U\subseteq T_1\cup\dots\cup T_{k-1}$, $T_i\subseteq V_i$ for $i\in[k]$, $|W_S| \le |S|$, and $|W_U| \le \max\{ 0, m-1 \}$. 
\end{enumerate}

This process is described in Algorithm~\ref{algo:DFS}.

\IncMargin{1em}
\begin{algorithm}
	\SetKwData{Left}{left}\SetKwData{This}{this}\SetKwData{Up}{up}\SetKwRepeat{Do}{do}{while}
	\SetKwFunction{Union}{Union}\SetKwFunction{FindCompress}{FindCompress}
	\SetKwInOut{Input}{Input}
	\Input{A $(2,3)$-graph $F=(V,E)$, disjoint subsets of vertices $V_1,\dots,V_k$, and an ordering of the vertices $V=\{ v_1,\dots,v_N \}$.}
	\BlankLine
	Define $m \leftarrow 0$, $S\leftarrow \emptyset$, $W_S \leftarrow \emptyset$, $W_U \leftarrow \emptyset$, $T_i\leftarrow V_i$ for $1\leq i\leq k$\label{algo:init}\;
	\While{$T_k\neq\emptyset$\label{algo:while}}{
		\eIf{$m=0$}{
			Let $v$ be the vertex with smallest index from $T_k$\;
			$u_1 \leftarrow v$, $m\leftarrow 1$, $T_k \leftarrow T_k \setminus \{v\}$\label{algo:m0}\;
		}{
			Let $T_\ext \leftarrow \{ v\in T_k \colon u_mv \in E \text{ or } u_mv(w) \in E \text{ with }w \in T_1\cup\cdots\cup T_k \}$\label{algo:Tp}\; 
			\eIf{$T_\ext\not= \emptyset$\label{algo:Tempty}}{
				Let $v $ be the vertex with the smallest index from $T_\ext$\;
				$u_{m+1} \leftarrow v$, $T_k \leftarrow T_k \setminus \{v\}$\label{algo:m+1}\;
				\If{$u_mu_{m+1} \not \in E$}
				{
					Let $w \in T_1\cup\cdots\cup T_k $ be the vertex of smallest index such that $u_mu_{m+1}(w) \in E$;
					\tcp{There is one because $u_{m+1} \in T_\ext$.}
					$W_U \leftarrow W_U \cup \{w\}$ and $T_i \leftarrow T_i \setminus \{w\}$, where $w\in T_i$\label{algo:WU+}\;
				}
				$m \leftarrow m+1$\;
			}{
				$S \leftarrow S \cup \{ u_m \}$\label{algo:m-1}\;
				\If{$m>1$}
				{
				\If{$u_{m-1}u_m \not \in E$} 
				{
					Let $u_{m-1}u_m(w) \in E$ with $w \in W_U$ be the edge used by the $(2,3)$-path; \tcp{This is well defined by~\ref{prop:Upath}.}
					$W_S \leftarrow W_S \cup \{w\}$ and $W_U \leftarrow W_U \setminus \{w\}$ \label{algo:WU-}\;
				}
			}
				$m \leftarrow m-1$\;
			}
		}
	}
	\caption{DFS algorithm for traversing a $(2,3)$-graph.} \label{algo:DFS}
\end{algorithm}\DecMargin{1em}

\begin{lemma}
	\label{lem:algo}
	Algorithm~\ref{algo:DFS} terminates and Properties~\ref{prop:Upath} and~\ref{prop:Vcovered} hold throughout.
\end{lemma}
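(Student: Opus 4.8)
The plan is to prove the two invariants by induction on the number of completed passes through the \textbf{while} loop, and to establish termination separately by a monovariant argument. Since $U$ only ever receives vertices of $T_k$, and $S$ only ever receives vertices deleted from the end of $U$, while no vertex is ever added to $T_k$ nor removed from $S$, the sets $S$ and $U$ are at all times disjoint subsets of $V_k$. Hence $\Phi := 2|S| + m$ (with $m := |U|$) never exceeds $2|V_k| \le 2N$; and each pass through the loop either extends $U$ (lines~\ref{algo:m0} or~\ref{algo:m+1}, so $m$ goes up by one while $|S|$ is unchanged) or backtracks (line~\ref{algo:m-1}, so $|S|$ goes up by one while $m$ goes down by one), and in both cases $\Phi$ increases by exactly $1$. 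Thus the loop runs at most $2N$ times and the algorithm halts.

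For the invariants, the base case is the configuration right after line~\ref{algo:init}: here $m = 0$, the empty sequence is vacuously a $(2,3)$-path with empty witness set, all the inclusions hold, and $|W_S| = |W_U| = 0 = \max\{0,-1\}$. For the inductive step, suppose~\ref{prop:Upath} and~\ref{prop:Vcovered} hold at the start of an iteration (so in particular $T_k \ne \emptyset$); there are three branches. If $m = 0$, we move the smallest-index vertex of $T_k$ into $U$ as $u_1$; then $U = (u_1)$ is an edgeless $(2,3)$-path, $u_1 \in T_k \subseteq V_k$, and~\ref{prop:Upath},~\ref{prop:Vcovered} hold just as in the base case. If $m \ge 1$ and $T_{\ext} \ne \emptyset$, we append the smallest-index vertex $v = u_{m+1}$ of $T_{\ext}$ to $U$ (deleting it from $T_k$) and, when $u_m u_{m+1} \notin E$, also move the smallest-index $w \in T_1 \cup \dots \cup T_k$ with $u_m u_{m+1}(w) \in E$ into $W_U$ (deleting it from its part $T_i$); such a $w$ exists precisely because $u_{m+1} \in T_{\ext}$. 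By the definition of $T_{\ext}$ the new consecutive pair $u_m u_{m+1}$ is realised by an edge of $F$, so the lengthened sequence is a $(2,3)$-path provided the new witness $w$ (if any) is distinct from all earlier witnesses and from every vertex of the path. This is exactly what the deletion discipline buys: any vertex ever used as a witness was removed from $T_1 \cup \dots \cup T_k$ at that moment and never returned, and any path vertex was removed from $T_k$ when appended, so the fresh $w$, being drawn from $T_1 \cup \dots \cup T_k$, differs from all of them. Together with~\ref{prop:Upath} for the old portion this shows $U$ remains a $(2,3)$-path with witness set $W_U$; the inclusions in~\ref{prop:Vcovered} persist since $u_{m+1} \in T_k \subseteq V_k$ and, by the pairwise disjointness of $V_1,\dots,V_k$, the new $w$ lies in $T_1 \cup \dots \cup T_{k-1}$, while $|W_U|$ grows by at most one as $m$ grows by one.

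In the remaining branch $m \ge 1$, $T_{\ext} = \emptyset$: we move $u_m$ from $U$ to $S$ and, if $m > 1$ and $u_{m-1}u_m$ is a $3$-edge, move its witness (which lies in $W_U$ by~\ref{prop:Upath}) from $W_U$ to $W_S$. Deleting the last vertex of a $(2,3)$-path along with its witness (if present) again yields a $(2,3)$-path whose witness set is the updated $W_U$, so~\ref{prop:Upath} survives, and the inclusions in~\ref{prop:Vcovered} are obviously maintained. It remains to recover the two inequalities after each branch. For $|W_S| \le |S|$: the size $|W_S|$ grows only in a pass in which $|S|$ also grows, and neither ever shrinks, so the inequality is preserved. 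For $|W_U| \le \max\{0,m-1\}$: by~\ref{prop:Upath} the quantity $|W_U|$ is the number of $3$-edges among the $m-1$ edges of $U$, hence $|W_U| \le m-1$ always, $|W_U| \le m-2$ when the last edge of $U$ is a $2$-edge, and $W_U = \emptyset$ when $m \le 1$; tracing through the three branches, each of these is exactly the bound needed to re-establish $|W_U| \le \max\{0,m-1\}$ for the new value of $m$.

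The step I expect to be the main obstacle is the $(2,3)$-path bookkeeping in the extension and backtracking branches: one must verify that the witnesses attached to $U$ remain pairwise distinct and disjoint from the vertex set of $U$, and that the counting inequalities in~\ref{prop:Vcovered} are genuinely restored rather than merely almost so. The mechanism that makes all of this go through is that a vertex used as a witness is \emph{immediately} deleted from every pool $T_1,\dots,T_k$, which simultaneously prevents its reuse and keeps it apart from the path vertices (drawn only from $T_k$); and one must call on~\ref{prop:Upath} to obtain the sharper bound $|W_U| \le m-2$ that is needed when backtracking across a $2$-edge.
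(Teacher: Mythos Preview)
Your proof is correct and follows essentially the same approach as the paper: induction on the number of completed loop iterations for the two invariants, together with a separate counting argument for termination. Your explicit monovariant $\Phi = 2|S| + m$ and your more careful justification of witness-distinctness in~\ref{prop:Upath} are mild elaborations on the paper's presentation, but the underlying structure is the same.
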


\begin{proof}[Proof of Lemma~\ref{lem:algo}]
	Observe that~\ref{prop:Upath} and~\ref{prop:Vcovered} hold when we
	initialise the sets and put $m=0$ on line~\ref{algo:init}.  Assume
	that we are in some step of the algorithm,
	where~\ref{prop:Upath} and~\ref{prop:Vcovered} hold and we have
	vertices $U=(u_1,\ldots,u_m)$ forming a $(2,3)$-path (this is true
	because of~\ref{prop:Upath}).
	
	Now we consider the next execution of the while loop.
	We know from~\ref{prop:Upath} that~$W_U$ contains exactly the vertices used
	in the edges $u_iu_{i+1}(w)$ for $i=1,\dots,m-1$.  Either we extend
	the path by an edge $u_mv$ or $u_mv(w)$ (lines~\ref{algo:Tp}
	and~\ref{algo:m+1}) where $w$ is added to $W_U$ if needed
	(line~\ref{algo:WU+}), or we remove an edge $u_{m-1}u_m$ or
	$u_{m-1}u_m(w)$ (lines~\ref{algo:m-1} and~\ref{algo:WU-}) where $w$
	is removed from $W_U$ if needed (line~\ref{algo:WU-}).
	Therefore,~\ref{prop:Upath} still holds.

	For~\ref{prop:Vcovered} it is easy to see that $T_i\subseteq V_i$ for $i\in[k]$, as in the beginning of the execution we have $T_i=V_i$ and no vertex is added to $T_i$.
	Also, since every $w$ in $W_U$ comes from $T_1\cup\dots\cup T_k$, we have $W_U\subseteq T_1\cup\dots\cup T_{k-1}$ (see lines~\ref{algo:Tp} and~\ref{algo:WU-}).
	Since every vertex of $S$ comes from $U$ (line~\ref{algo:m-1}) and every vertex of $U$ comes from $T_k\subset V_k$ (lines~\ref{algo:m0},~\ref{algo:Tp} and~\ref{algo:m+1}), which implies that $S$, $U\subset V_k$.  
	To prove that $|W_S| \le |S|$, it is enough to
	observe that line~\ref{algo:WU-} can only be executed after an
	execution of line~\ref{algo:m-1}.  Similarly, we have
	$|W_U| \le m-1$ with $m \ge 2$, because line~\ref{algo:WU+} can only
	be executed after an execution of line~\ref{algo:m+1}, and $|W_U|=0$
	with $m=1$, because on line~\ref{algo:m0} nothing is added to $W_U$.
	Thus,~\ref{prop:Vcovered} also remains true.
	
	It remains to show that the algorithm terminates.
	In every execution of the while loop, either one vertex from $T_k\subseteq V_k$
	is added to the path $U$ (lines~\ref{algo:m0} and~\ref{algo:m+1}) or
	moved from~$U$ to~$S$ (line~\ref{algo:m-1}).
	Therefore, after at most $2|V_k|$ steps we have $T=\emptyset$, and the
	algorithm terminates.
\end{proof}

We are ready to prove the aforementioned result on $(2,3)$-graphs $F$ with no long $(2,3)$-paths.

\begin{lemma}
	\label{lem:transversal}
	Let $k$, $c$ and $n$ be positive integers and let $F=(V,E)$ be a $(2,3)$-graph on at least $5^{k-1}cn$ vertices.
	If $F$ contains no $(2,3)$-path with $n$ vertices, then there exist disjoint sets
	$V_1,\dots,V_{k} \subseteq V$ of size at least $cn$ such that no
	edge from $E$ is a transversal and there is no edge $uv(w)$ with $u \in V_1 \cup \dots \cup V_{k-1}$ and $v,w \in V_{k}$.
\end{lemma}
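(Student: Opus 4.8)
The proof is by induction on $k$, with the DFS algorithm (Algorithm~\ref{algo:DFS}) as the engine. Let me sketch the base case $k=1$ and then the inductive step.

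For the base case $k=1$: we run Algorithm~\ref{algo:DFS} with the single set $V_1 = V$ (so $|V| \ge cn$). Since $F$ has no $(2,3)$-path on $n$ vertices, by Lemma~\ref{lem:algo} and the invariant~\ref{prop:Vcovered} we always have $m \le n-1$ (otherwise $U$ would be a $(2,3)$-path on $n$ vertices) and hence $|W_U| \le n-2$, $U$ and $W_U$ together occupy at most $2n-3$ vertices. The while loop terminates only when $T_k = T_1 = \emptyset$; at that point every vertex of $V$ has been moved into $S$, into $U$, or into $W_U \cup W_S$. But $|W_S| \le |S|$, so $|S| + |W_S| \le 2|S|$, and combining with $|U| + |W_U| \le 2n$ we get $|V| \le 2|S| + 2n$, so $|S| \ge |V|/2 - n \ge cn/2 - n$. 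Hmm — this needs $|V| \ge cn$ with a bigger constant. Actually for $k=1$ the claimed conclusion is just a single set $V_1$ of size $\ge cn$ with no transversal (vacuous, since transversal requires vertices in different $V_i$) and no edge $uv(w)$ with $u \in \emptyset$ — also vacuous. So the $k=1$ case is trivial: take $V_1 = V$. The real content starts at $k=2$.

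For the inductive step, assume the statement for $k-1$ and prove it for $k$; here $|V| \ge 5^{k-1}cn$. The idea is to run the DFS with a suitable choice of initial sets $V_1,\dots,V_k$ obtained from the $(k-1)$-case, then analyse the terminal configuration. Concretely: apply the induction hypothesis to $F$ (which still has no $(2,3)$-path on $n$ vertices) to obtain disjoint sets $V'_1,\dots,V'_{k-1}$ of size at least $5 cn$ with no transversal among them and no bad edge into $V'_{k-1}$. Then partition one of these, say $V'_{k-1}$, into pieces and run Algorithm~\ref{algo:DFS} with inputs $V_1 := V'_1, \dots, V_{k-1} := V'_{k-2}$ and $V_k$ a large chunk of $V'_{k-1}$. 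After termination, $T_k = \emptyset$, so all of $V_k$ has been distributed among $S$, $U$, $W_U$, $W_S$. Since there is no $(2,3)$-path on $n$ vertices, $m \le n-1$ always, so $|U| + |W_U| \le 2n$; thus $|S| \ge |V_k|/2 - n$, and choosing $|V_k|$ large enough (a constant fraction of $5cn$, say $\ge 3cn$) makes $|S| \ge cn$. The crucial point is what the invariants and the exit condition of the while loop guarantee about $S$: when a vertex $u_m$ is moved to $S$ on line~\ref{algo:m-1}, the set $T_\ext$ was empty, meaning there is no $v \in T_k$ with $u_mv \in E$ or $u_mv(w) \in E$ for some $w \in T_1 \cup \dots \cup T_k$. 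Since the $T_i$ only shrink, this non-adjacency persists to the final $T_k$. So the final $S$ (rename it $V_k$) has no $2$-edge and no $3$-edge to the final $T_k$ in any "relevant" sense — but we need the statement relative to $V_1,\dots,V_{k-1}, S$, not to $T_k$. This is where the recursion has to be set up carefully: the sets $V_1,\dots,V_{k-1}$ returned by induction already have no transversals among themselves; we need to additionally ensure $S$ has no transversal edge touching them, and no edge $uv(w)$ with $u \in V_1\cup\dots\cup V_{k-1}$, $v,w \in S$. The first follows because such an edge would be detectable from the vertex of $S$ that lies in $U$ when the other endpoint is still in some $T_i$ — this requires tracking the order in which vertices leave the $T_i$'s, which is exactly what DFS gives (a vertex goes to $S$ only when no extending edge into the current $T_k$ exists, but we also need no edge back into $T_1,\dots,T_{k-1}$).

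I expect the main obstacle to be exactly this bookkeeping: proving that the output set $S$ has no transversal with respect to the full family $V_1,\dots,V_{k-1},S$ and no edge $uv(w)$ with $u \in \bigcup_{i<k} V_i$ and $v,w \in S$. The DFS invariants as stated control $W_S, W_U$ versus $S, U$ by cardinality, and the exit test on line~\ref{algo:Tempty} controls adjacency into $T_k$, but adjacency of $S$ into $V_1,\dots,V_{k-1}$ (for $2$-edges) and the "$u$ outside, $v,w$ inside $V_k$" condition for $3$-edges needs the inductive hypothesis applied to a larger structure — essentially, one enlarges $F$ by viewing each potential such edge and arguing it would either create a long $(2,3)$-path or contradict the $(k-1)$-case. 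The factor $5^{k-1}$ in the hypothesis (vs.\ $5^{k-2}$ for the inductive hypothesis) is what gives the slack: a factor of $5$ is spent per induction step — roughly a factor $2$ to go from "$|V_k|$" to "$|S| \ge |V_k|/2 - n$", and the remaining factor to absorb the vertices consumed as $w$'s in $3$-edges and to leave room for all $k-1$ other sets of size $\ge cn$. I would organise the write-up as: (i) state and verify the $k=1$ case; (ii) set up the induction, choosing constants so that $5^{k-1}cn$ suffices; (iii) run the algorithm and read off $|S| \ge cn$ from the cardinality invariants plus $m \le n-1$; (iv) verify the two "no edge" properties for $S$, using the line~\ref{algo:Tempty} exit condition for adjacency into $V_k = S$ itself and the induction hypothesis plus a no-long-path argument for edges reaching into $V_1,\dots,V_{k-1}$.
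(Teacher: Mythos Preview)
Your overall structure---induction on $k$ with trivial base case $V_1=V$, then Algorithm~\ref{algo:DFS} in the inductive step---matches the paper. But your inductive step has a real gap.

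You run the algorithm to termination ($T_k=\emptyset$) and then try to assemble the $k$ output sets. This cannot work: if the algorithm is fed the $k-1$ sets coming from the induction hypothesis, the last one is exhausted at termination and you are left with only $k-1$ candidate sets (the surviving $T_i$'s together with $S$). Your workaround of pre-splitting $V'_{k-1}$ into ``a large chunk'' for the algorithm and (implicitly) a reserved chunk for the output does not help: the exit condition at line~\ref{algo:Tempty} only rules out edges from $u\in S$ into the \emph{current} last set $T_k$, so you get no control on edges between $S$ and a piece that was never handed to the algorithm, and the induction hypothesis says nothing about two subsets of the same $V'_{k-1}$. (Your indexing $V_1:=V'_1,\dots,V_{k-1}:=V'_{k-2}$ also loses a set; this is a symptom of the same problem.)

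The paper's fix is to \emph{stop the execution the first time $|S|=cn$}, not to run to termination. Feed all $k-1$ sets $V'_1,\dots,V'_{k-1}$ (each of size at least $5cn$) directly to the algorithm. At the moment $|S|=cn$ one has $|W_S|\le cn$ and $|U|,|W_U|<n$, so every $T_i$ still has at least $cn$ vertices; in particular the last one $T_{k-1}$ is still large. The $k$ output sets are then $T_1,\dots,T_{k-2}$, $S$, and $T_{k-1}$, with $T_{k-1}$ playing the role of the distinguished last set. This also clarifies which ``no-edge'' checks are genuinely at issue: since $S,T_{k-1}\subseteq V'_{k-1}$ and $T_i\subseteq V'_i$, any transversal not involving both $S$ and $T_{k-1}$ is already a transversal for $V'_1,\dots,V'_{k-1}$ and is ruled out by the induction hypothesis---no extra ``no-long-path argument'' is needed there. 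The only new cases are edges between $S$ and $T_{k-1}$, and edges $uv(w)$ with $u\in T_1\cup\dots\cup T_{k-2}\cup S$ and $v,w\in T_{k-1}$; for these the induction hypothesis forces $u\in S$, and then the emptiness of $T_\ext$ at the step when $u$ entered $S$ (together with the fact that the $T_i$ only shrink) gives the contradiction.
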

\begin{proof}
	We prove the result by induction on $k$.
	For $k=1$ the result follows by putting $V_1=V(F)$.
	Thus let $k\geq 1$ and assume the statement holds for $k$.
	
	To prove the result for $k+1$, let $F$ be a $(2,3)$-graph on $5^k cn$ vertices which does not have a path of length $n$.
	In particular, $F$ does not have a path of length $5n$, and by the assumption on $k$, there exist disjoint sets $V_1,\cdots,V_k$, each of size $5cn$, such that no edge is a transversal, and there is no edge $uv(w)$ with $u \in V_1 \cup \dots \cup V_{k-1}$ and $v,w \in V_{k}$.
	We run Algorithm~\ref{algo:DFS} with input $F$, $k$ and  $V_1, \dots, V_{k}$.
	
	First, we prove that at any point in the execution of the algorithm, no edge is a transversal with respect to $T_1\cup\dots\cup T_k, S$.
	Suppose for a contradiction that at some point there is an edge which is a transversal.
	Note that $T_k\subseteq V_k$ and $S\subseteq V_k$.
	If $uv$ is this edge, then by the induction hypothesis and without loss of generality we have $u\in S$ and $v\in T_k$.
	This implies that when $u$ was moved from $U$ to $S$ (line~\ref{algo:m-1}), the set $U$ could have been extended, which means that $T_\ext\neq \emptyset$ and line~\ref{algo:m-1} would not have been executed, a contradiction.
	Now, assume $uv(w)$ is the transversal.
	Since $T_i\subseteq V_i$ for $i\in [k]$ and $S\subseteq V_k$, we have $w\in T_1\cup\dots\cup T_{k-1}$.
	Again, by the induction hypothesis and without loss of generality we have $u\in S$ and $v\in T_k$.
	Similarly as when we have an edge $uv$, at the time $u$ was moved from $U$ to $S$, the set $U$ could have been extended, a contradiction.
	
	Now we prove that at any point in the execution of the algorithm, there is no edge $uv(w)$ with $u \in T_1,\dots,T_{k-1},S$ and $v,w \in T_k$.
	Suppose for a contradiction that at some point there is such edge $uv(w)$.
	By the induction hypothesis we have $u\in S$ and $v,w\in T_k$, which again gives a contradiction as $U$ could have been extended.
	
	Note that since $F$ has at least $5^{k-1}cn$ vertices and no $(2,3)$-path with $n$ vertices, we have $|S|=cn$ at some point of the execution of Algorithm~\ref{algo:DFS}.
	Let $U$, $W_U$, $S$, $W_S$ and $T_1,\dots,T_k$ be the sets at that moment.
	Note that $|W_S|\leq |S|= cn$ and, since there is no $n$-vertex $(2,3)$-path in $F$, we have $|U|$, $|W_U|\leq n$.
	Therefore, $|T_i|\geq |V_i|-2cn\geq cn$ for $i\in [k-1]$ and $|T_k|\geq |V_k|-4cn \geq cn$.
	Put $V_i' = T_i$ for $i\in [k-1]$, $V_k'=S$ and $V_{k+1}'=T_k$.
	The sets $V_1',\dots,V_{k+1}'$ satisfies the requirements of the lemma. 
\end{proof}

\section{Proof of Theorem~\ref{thm:main}}
\label{sec:proof}

In this section we prove our main theorem.  We first define the
following constants:
\begin{align*}
\ell:=17, \quad k:=2 \ell,\quad \eps:=1/(k+1),\quad t:=8k+40k^2+5,
\quad\text{and}\quad t':=r_2(K_t^{(3)}), 
\end{align*}
where $r_s(K_t^{(3)}) := \min \{ n : K_n^{(3)} \rightarrow (K_t^{(3)})_s \}$
is the classical Ramsey-number for hypergraphs.
We start by obtaining a graph $G$ with bounded maximum degree and some
nice pseudorandom properties.  Let $a_{{\rm L}.\ref{lem:H2} }$ be
large enough to apply Lemma~\ref{lem:H2} with $k$ and $\eps$ and set
\begin{align*}
c := \eps a_{{\rm L}.\ref{lem:H2} }\ell.
\end{align*}
Let $a_{{\rm L}.\ref{lem:transversal}} = 5^{k}$ and note that $a_{{\rm L}.\ref{lem:transversal}}$ is large enough to apply Lemma~\ref{lem:transversal} with $k+1$, $c$ and $n$ and set
\begin{align*}
a :=2 a_{{\rm L}.\ref{lem:transversal}}.
\end{align*}

Lemma~\ref{lem:Hexists} applied with $\eps$ and $a$ provides a
constant~$b$.  Let $n$ be sufficiently large.  Then, from
Lemma~\ref{lem:Hexists} we know that there is a graph $G$ on $an$
vertices with maximum degree $b$ such that~\ref{prop:exist} holds.
Fix such a graph~$G$.

Now let $G^k(t')$ be the graph obtained from $G^k$ -- the $k$-th 
power of $G$ -- by replacing every vertex by a $K_{t'}$ and every edge by a
$K_{t',t'}$.  Finally, $H$ is the $3$-uniform hypergraph with vertex
set $V(G^k(t'))$ and a triple of vertices $xyz$ is an edge in $H$ if
and only if $xyz$ forms a triangle in $G^k(t')$.  For every
$v \in V(G)$ we denote by $H(v)$ the corresponding cluster consisting
of a $K^{(3)}_{t'}$ in $H$.  We claim that
$H \rightarrow (P^{(3)}_{n})_2$.  Since $|V(H)| = a t' n$ and
$\Delta(H) \le b^{2k+2} t'^3$, this would prove Theorem~\ref{thm:main}.

The rest of the proof is devoted to proving
that~$H \rightarrow (P^{(3)}_{n})_2$.  Fix a $2$-colouring of the
triples of~$H$. As $t' \ge r_2(K^{(3)}_t)$ for every $v \in V(G)$ 
the cluster $H(v)$ either contains a red or blue copy of $K^{(3)}_t$.
W.l.o.g.~there is a set of vertices $V \subseteq V(G)$ with
$|V| \ge an/2 = a_{{\rm L}.\ref{lem:transversal}} n$ such that for
all $v \in V$ the cluster $H(v)$ contains a blue copy of $K^{(3)}_t$, 
which we denote by $H'(v)$.
We let $H' \subseteq H$ be the $3$-graph induced by the clusters 
$H'(v)$ for $v \in V$.

We will define an auxiliary $(2,3)$-graph~$F$ on the vertex
set~$V$, whose edges will indicate that we can walk between the clusters
using blue triples of~$H'$.  Formally, for $u,v \in V$ a \emph{$(2,2)$-connector}
between the clusters $H'(u)$ and $H'(v)$ consists of four vertices
$x_1,x_2 \in H'(u)$ and $y_1,y_2 \in H'(v)$ such that $x_1x_2y_1$ and
$y_1y_2x_1$ are triples of~$H$.  Similarly, for $u,v,w \in V$ a
\emph{$(2,1,2)$-connector} between the clusters $H'(u)$ and $H'(v)$
through $H'(w)$ consists of five vertices $x_1,x_2 \in H'(u)$,
$z \in H'(w)$, and $y_1,y_2 \in H'(v)$ such that $x_1x_2z$, $x_1zy_1$,
and $zy_1y_2$ are triples of~$H$; see Figure~\ref{fig:connectors}.  We
then define a \emph{$(6,6)$-connector} (\emph{$(6,3,6)$-connector}) between 
$H'(u)$ and $H'(v)$ (through $H'(w)$) as the
disjoint union of three $(2,2)$-connectors
($(2,1,2)$-connectors) between $H'(u)$ and $H'(v)$ (through $H'(w)$).
\begin{figure}[htbp]
	\begin{center}
	\begin{tikzpicture}
	
	\draw [rounded corners=2mm,fill=gray,opacity=0.4] (-.3,-.3)--(-.3,1.8)--(.3,1.8)--(4.3,.2)--(4.3,-.3)--cycle;
	\draw [rounded corners=2mm,fill=gray,opacity=0.4] (4.3,-.3)--(4.3,1.8)--(3.7,1.8)--(-.3,.2)--(-.3,-.3)--cycle;
	
	\draw[thick] (2,2)--(2,-1);
	\node[draw,circle,inner sep=3pt,fill] at (0,0) {};
	\node[draw,circle,inner sep=3pt,fill] at (0,1.5) {};
	\node[draw,circle,inner sep=3pt,fill] at (4,0) {};
	\node[draw,circle,inner sep=3pt,fill] at (4,1.5) {};
	
	\node at (-.7,0) {$x_1$};
	\node at (-.7,1.5) {$x_2$};
	\node at (4.7,0) {$y_1$};
	\node at (4.7,1.5) {$y_2$};
	
	\node at (.5,-.7) {$H'(u)$};
	\node at (3.5,-.7) {$H'(v)$};
	
	\end{tikzpicture}
	\quad
	\begin{tikzpicture}
	
	\draw [rounded corners=2mm,fill=gray,opacity=0.4] (1.3,.3)--(1,.1)--(-.4,1.5)--(-.1,1.75)--(2,2.9)--(2.35,2.5)--cycle;
	\draw [rounded corners=2mm,fill=gray,opacity=0.4] (2.7,.3)--(3,.1)--(4.4,1.5)--(4.1,1.75)--(2,2.9)--(1.65,2.5)--cycle;
	\draw [rounded corners=2mm,fill=gray,opacity=0.4] (.6,.5)--(1,.1)--(3,.1)--(3.4,.5)--(2.2,2.8)--(1.8,2.8)--cycle;	
	
	\draw[thick] (2,1)--(2,-.5);
	\draw[thick] (0.5,2.5)--(2,1)--(3.5,2.5);
	\node[draw,circle,inner sep=3pt,fill] at (1,.5) {};
	\node[draw,circle,inner sep=3pt,fill] at (0,1.5) {};
	\node[draw,circle,inner sep=3pt,fill] at (3,.5) {};
	\node[draw,circle,inner sep=3pt,fill] at (4,1.5) {};
	\node[draw,circle,inner sep=3pt,fill] at (2,2.5) {};
	
	\node at (.7,0) {$x_1$};
	\node at (-.5,1.2) {$x_2$};
	\node at (3.3,0) {$y_1$};
	\node at (4.5,1.2) {$y_2$};
	\node at (2,3) {$z$};	
	
	\node at (-.4,.5) {$H'(u)$};
	\node at (4.4,.5) {$H'(v)$};
	\node at (2,3.5) {$H'(w)$};	
	
	\end{tikzpicture}
	\end{center}
	\caption{A $(2,2)$-connector and a $(2,1,2)$-connector.}
	\label{fig:connectors}
\end{figure}
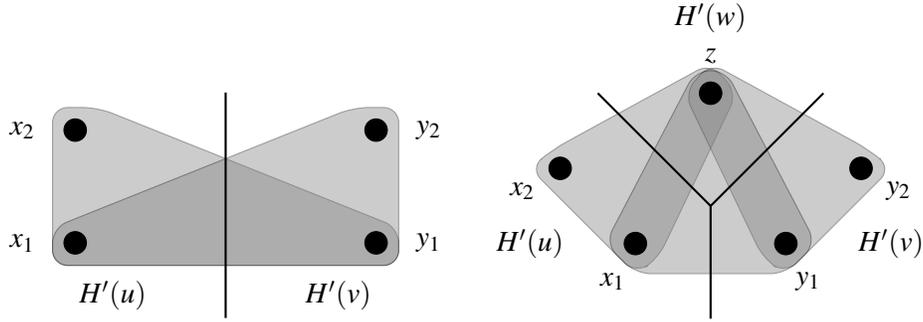

Let $F$ be a $(2,3)$-graph on the vertex set $V$ with the following
two types of edges:
\begin{enumerate}[label=\upshape({\itshape\roman*\,\/})]
	\item $uv\in E(F)$ if and only if there is a $(6,6)$-connector in blue
	between the corresponding clusters $H'(u)$ and $H'(v)$;
	\item $uv(w)\in E(F)$ if and only if there is a $(6,3,6)$-connector in
	blue between the corresponding clusters~$H'(u)$ and~$H'(v)$ through
	$H'(w)$.
\end{enumerate}

Suppose that $F$ contains a $(2,3)$-path on $n$ vertices
$v_1,\dots,v_n$ with the $w_i$ vertices all distinct for the $3$-edges.
We can turn this $(2,3)$-path into a blue tight path~$P^{(3)}_n$ in~$H$ as follows.
First, by following the $(2,3)$-path for $i = 1,\dots,n-1$, we choose a $(2,2)$-connector between $H'(v_i)$ and $H'(v_{i+1})$ if $v_iv_{i+1} \in E(F)$ or a $(2,1,2)$-connector between $H'(v_i)$ and $H'(v_{i+1})$ through $H'(w_i)$ if $v_iv_{i+1}(w_i) \in E(F)$ in such a way that they are all pairwise vertex-disjoint. 
This is possible, because we have $(6,6)$-connectors and $(6,3,6)$-connectors available and two vertices $y_1$ and $y_2$ from the previous connector can occupy at most two out of the three disjoint copies of connectors that are provided.
Then, for each of the clusters with $y_1$, $y_2$ and $x_1$, $x_2$ the vertices of the connectors within that cluster, we use the edges $y_1y_2x_2$ and $y_2x_2x_1$ to connect both connectors.
As the connectors only use blue edges and all edges within the clusters are blue this is a tight path only using blue edges.
Thus, in this case, we are able to obtain a blue~$P^{(3)}_n$ in~$H$ and we are done.

We assume that $F$ contains no $(2,3)$-path on $n$ vertices.
From Lemma~\ref{lem:transversal}, there exist pairwise disjoint sets 
$V_1,\dots,V_{k+1} \subseteq V$ of size at least $cn$ such that no
edge from $E(F)$ is a transversal.
We may assume that all these sets~$V_j$ have size exactly~$cn$.
Let $G'=G[V_1\cup\dots\cup V_{k+1}]$ and set $m := \ell n$.

We now want to find a path $P_m$ alternating through $V_1,\dots,V_{k+1}$
with edges in $G'\subseteq G$ using Lemma~\ref{lem:H2}. Since
$c=\eps a_{{\rm L}.\ref{lem:H2} }\ell$ and $\eps=1/(k+1)$, we have
$|V(G')| = (k+1) c n = a_{{\rm L}.\ref{lem:H2}}\ell n =a_{{\rm
		L}.\ref{lem:H2}}m$.  Also, we have
$|V_i| =  cn = \eps a_{{\rm L}.\ref{lem:H2}} m$ for $1\leq i\leq k+1$. 
As~$G'$ is an induced
subgraph of~$G$ and property~\ref{prop:exist} holds in $G$, property~\hyperref[prop:exist]{\upshape(P1$_m$)} does hold for~$G'$.
Therefore, by Lemma~\ref{lem:H2}, we conclude that there is a path
$P_m=P_{\ell n}$ with vertices alternating through $V_1,\dots,V_{k+1}$
and with edges in $G'\subseteq G$.

This path $P_{\ell n}$ gives us the $k$th power $P^k_{\ell n}$
in~$G^k$.  By the choice of $V_1,\dots,V_{k+1}$ no edge of
$P^k_{\ell n}$ is from $E(F)$ and also no triangle in $P^k_{\ell n}$
induces an edge $uv(w) \in E(F)$.  It remains to turn this
$P^k_{\ell n}$ into a red $P^{(3)}_n$ in~$H'$.

\begin{claim}
	\label{lem:redpath}
	If there is a $P_{\ell n}^k$ in $G^k$ that does not contain any
	edges from $F$, then there is a red~$P^{(3)}_n$ in $H'$.
\end{claim}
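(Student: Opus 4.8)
The goal is to take the $k$-th power $P^k_{\ell n}$ of a path $P_{\ell n} = (x_1,\dots,x_{\ell n})$ in $G^k$, none of whose edges (as a $2$-edge) lies in $F$ and none of whose triangles induces a $3$-edge $uv(w)$ of $F$, and produce a red tight path $P^{(3)}_n$ inside $H'$. The key observation is that \emph{absence} of an edge $uv$ in $F$ means there is \emph{no} blue $(6,6)$-connector between $H'(u)$ and $H'(v)$, and absence of $uv(w)$ means no blue $(6,3,6)$-connector; since each $H'(v)$ is a blue $K^{(3)}_t$ with $t$ large, a counting argument (pigeonhole over the at most $\binom{t}{2}$ choices of a pair in one cluster, etc.) will force the \emph{existence} of many red $(2,2)$-connectors between consecutive clusters and red $(2,1,2)$-connectors through a third cluster. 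Quantitatively: from $H'(u)\cup H'(v)$ with no blue $(2,2)$-connector appearing $3$ times disjointly, one gets a bounded-size ``blocker'' set, and outside it essentially every pair $x_1x_2\in H'(u)$ with an appropriate red neighbour in $H'(v)$ yields a red $(2,2)$-connector; the slack in $t = 8k+40k^2+5$ is exactly what makes these connectors survive after removing the bounded blockers and the vertices already committed.

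First I would set up the walk through the clusters. Because $P_{\ell n}$ alternates through $V_1,\dots,V_{k+1}$ and we take its $k$-th power, every window of $k+1$ consecutive vertices $x_i,\dots,x_{i+k}$ forms a clique in $G^k$, so \emph{any} two clusters among $H'(x_i),\dots,H'(x_{i+k})$ are joined by a (red) $(2,2)$-connector and any triple by a (red) $(2,1,2)$-connector, with the requisite quantitative abundance. I would then describe how to build the tight path $P^{(3)}_n$ cluster by cluster: we only need $n$ ``main'' clusters worth of vertices for $P^{(3)}_n$ itself, but we will spend the remaining $\ell n - n = (\ell-1)n$ clusters of the path as ``service'' clusters to host the $z$-vertices of $(2,1,2)$-connectors and to absorb parity/overlap issues. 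Concretely, between the $j$-th and $(j+1)$-st main cluster I place a red $(2,1,2)$-connector routed through one of the nearby service clusters (available since $k = 2\ell$ is large relative to $\ell$, so the service clusters needed for connector $j$ lie within the $k$-window around the main clusters $j,j+1$), then inside each main cluster use two more red triples $y_1y_2x_2$, $y_2x_2x_1$ to stitch the incoming connector to the outgoing one, exactly as in the blue construction earlier in the section. All edges used lie inside $H'$ and are red, so the result is a red tight path.

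The bookkeeping I would isolate as the technical heart: one must check (i) that all connectors and stitching triples can be chosen \emph{pairwise vertex-disjoint}, which is where having \emph{three} disjoint copies in each connector and the constant $t$ chosen with room for $8k+40k^2+5$ committed/blocked vertices per cluster matters — at most two vertices leak in from the previous connector, a bounded number of ``blockers'' are excluded, and $z$-vertices in service clusters are charged against the $40k^2$-type term; and (ii) that the $z$-vertices across different connectors are all distinct and confined to service clusters, which follows from routing connector $j$ through a service cluster determined by $j \bmod (\text{something})$ within the $k$-window, so that no two connectors compete for the same service cluster's budget. The main obstacle is precisely this disjointness accounting — verifying that the constants $\ell=17$, $k=2\ell$, $t=8k+40k^2+5$ are chosen so that, cluster by cluster, the number of vertices ever used (two from each adjacent connector, two stitching vertices, blockers, and hosted $z$'s) stays below $t$, and that the red connectors guaranteed by the non-edge-of-$F$ hypothesis remain available after these removals. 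Once that is pinned down, concatenating the pieces yields the red $P^{(3)}_n$ and proves the claim.
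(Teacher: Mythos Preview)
Your proposal has a genuine gap at its core. You assume that the absence of a blue $(6,6)$-connector between $H'(u)$ and $H'(v)$ yields, after removing a bounded blocker set, many vertex-disjoint \emph{red} $(2,2)$-connectors. This is false. Once blockers are removed so that no blue $(2,2)$-connector remains, the conclusion is only that for every choice of $x_1,x_2\in H^*(u)$ and $y_1,y_2\in H^*(v)$, \emph{at least one} of the two triples $x_1x_2y_1$, $y_1y_2x_1$ is red --- not that both are. It is perfectly consistent with this that no all-red $(2,2)$-connector exists at all (and likewise for $(2,1,2)$-connectors). Hence your plan to ``place a red $(2,1,2)$-connector'' between consecutive main clusters and stitch them together ``exactly as in the blue construction'' cannot get started: the blue construction worked because an edge of $F$ \emph{certifies} a blue $(6,6)$- or $(6,3,6)$-connector, whereas a non-edge of $F$ certifies nothing of the sort in red.

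The paper's proof is organised differently to exploit precisely the weak information ``one of two specific triples is red''. After the cleanup step (which you also describe), it does not look for red connectors. Instead it maintains \emph{two} red tight paths simultaneously, with end-tuples $(u_1,u_2)$ and $(w_1,w_2)$ lying in two distinct clusters of the current block of $\ell$ clusters. The absence of a blue $(2,2)$-connector between those two clusters then forces, say, $u_1u_2w_2$ to be red, extending one of the two paths by a single triple. A pigeonhole over the $\ell=17$ clusters of the next block, followed by a short case analysis (using the absence of blue $(2,1,2)$-connectors to force further individual red triples), produces two new end-tuples in distinct clusters of that block, each ending a red path at least one longer. The two-path invariant is essential: it is what lets ``at least one of two triples is red'' always be cashed in. Your single-path, connector-based scheme has no analogue of this, and that is where it fails.
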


Let $P_{\ell n}^k=(v_1,\dots,v_{\ell n})$ and recall $H'(v_i)$ is the
cluster in $H'$ corresponding to the vertex $v_i$ for
$i=1,\dots,\ell n$.  We want to remove all vertices of $H'$ which
belong to blue $(2,2)$-connectors and $(2,1,2)$-connectors from
clusters along edges and triangles of $P_{\ell n}^k$.  
In $P_{\ell n}^k$ every vertex $v_i$ is incident to at most $2k$ other vertices in
$\{v_1,\dots,v_{\ell n}\}$ ($2k$ is the maximum degree of the $v_i$
in $P^k_{\ell n}$).  Also, every $v_i$ is contained in at most
$4k^2$ triangles of $P_{\ell n}^k$ together with two other vertices in
$\{v_1,\dots,v_{\ell n}\}$.

Let $u$ and $v$ be neighbours in $P_{\ell n}^k$.  Since there is no
blue $(6,6)$-connector between $H'(u)$ and $H'(v)$, there are at most
two $(2,2)$-connectors that do not overlap between $H'(u)$ and
$H'(v)$, which can both be deleted by removing at most $4$ vertices in each
cluster.  Let $u$, $v$, and $w$ be vertices that form a triangle in
$P_{\ell n}^k$.  Since there is no blue $(6,3,6)$-connector between
$H'(u)$, $H'(v)$, and $H'(w)$, there are at most six
$(2,1,2)$-connectors that do not overlap, two for each possibility to
place the single vertex.  These can be deleted by removing at most
$10$ vertices from each cluster.

By the above argument, we have to delete at most
$4(2k) + 10(4 k^2) \le t - 5$ vertices from every cluster to get rid
of all $(2,2)$-connectors and $(2,1,2)$-connectors.  Let
$H^*(v_i) \subseteq H'(v_i)$ be the remainder of the corresponding
cluster in $H'$, and note that $|H^*(v_i)| \ge 5$ for
$i=1,\dots,\ell n$.

A tuple $(u,v)$ is an \emph{end-tuple} of a tight path with at
least~$4$ vertices if~$u$ and~$v$ are consecutive vertices in the path
and~$u$ is contained in exactly two edges and $v$ is contained in
exactly one.  
The two tuples~$(u,v)$ and~$(v,w)$ are the end-tuples of the tight path~$(u,v,w)$ of
length~$3$.
Furthermore, every tuple $(u,v)$ is an end-tuple of the tight
path~$(u,v)$ of length $2$.

\begin{definition}
	For $i=1,\dots,n-1$ we say that the quadruple $(u_1,u_2,w_1,w_2)$ satisfies 
	property $Q_i$ if the following conditions hold:
	\begin{enumerate}
		\item $u_1,u_2,w_1,w_2$ are distinct vertices from $H'$ such that the pairs
		$u_1,u_2$ and $w_1,w_2$ are in clusters
		$H_r,H_s \in \{H^*(v_{(i-1)\ell +1}), \dots, H^*(v_{i\ell})\}$, respectively, where
		$r\neq s$;
		\item each of $(u_1,u_2)$ and $(w_1,w_2)$ is an end-tuple of a
		red tight path of length at least $i+1$ with vertices in
		$H^*(v_1)\cup \dots\cup H^*(v_{i\ell})$.
	\end{enumerate}
\end{definition}

To prove Claim~\ref{lem:redpath}, i.e., construct $P^{(3)}_n$ in red,
it is then sufficient to construct a quadruple satisfying property
$Q_{n-1}$.  We will construct this quadruple inductively.  The base case
$Q_1$ asks for two paths of length $2$ and, therefore, it is enough to
choose any pair $u_1,u_2$ from $H^*(v_{1})$ and $w_1,w_2$ from
$H^*(v_{2})$.  Therefore, the following is immediate:
\begin{equation}
\label{lem:basecase}
\text{There exists a quadruple $(u_1,u_2,w_1,w_2)$ for which $Q_1$
	holds.} 
\end{equation}

We will inductively find a quadruple $(u_1,u_2,w_1,w_2)$ satisfying
property $Q_i$ for every $i=2,\dots,n-1$.  Ultimately, after $n$ steps,
this gives us $P^{(3)}_n$ in red in $H'$.  Suppose $(u_1,u_2,w_1,w_2)$
satisfies property $Q_{i-1}$ for some $1<i\leq n-1$.  In the inductive
step, we obtain $(x_1,x_2,y_1,y_2)$ satisfying property $Q_i$ by
extending one of the paths ending in $(u_1,u_2)$ or $(w_1,w_2)$ to get
two longer paths ending in $(x_1,x_2)$ and $(y_1,y_2)$.  This mainly
relies on the absence of $(2,2)$-connectors and $(2,1,2)$-connectors
in blue and that there are $\ell$ clusters $H^*(v_{(i-1)\ell+1}),\dots,H^*(v_{i \ell})$ 
to choose $x_1,x_2,y_1,y_2$ from. As $k \ge 2\ell$, all edges are present between 
these clusters and the clusters $H_r$ and $H_s$ containing $u_1,u_2$ 
and $w_1,w_2$, respectively. 

\begin{fact}
	\label{lem:inductionstep}
	Let $1< i \le n-1$ and suppose $(u_1,u_2,w_1,w_2)$ satisfies
	property~$Q_{i-1}$.  Then there is a quadruple $(x_1,x_2,y_1,y_2)$
	that satisfies property~$Q_{i}$.
\end{fact}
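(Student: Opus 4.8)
The plan is to take the two red tight paths guaranteed by $Q_{i-1}$ — call them $P$, ending in the end-tuple $(u_1,u_2)$, and $P'$, ending in $(w_1,w_2)$ — and extend exactly one of them (say $P$) by two more vertices, producing a path ending in a new end-tuple $(x_1,x_2)$, while keeping $(y_1,y_2):=(w_1,w_2)$ for the other path. Both $P$ and $P'$ have length at least $i$, live inside $H^*(v_1)\cup\dots\cup H^*(v_{(i-1)\ell})$, and have their last two vertices in two distinct clusters $H_r,H_s$ among $H^*(v_{(i-2)\ell+1}),\dots,H^*(v_{(i-1)\ell})$. We want to push $P$ forward so that its new end-tuple lands in two distinct clusters among the \emph{next} block $H^*(v_{(i-1)\ell+1}),\dots,H^*(v_{i\ell})$, which consists of $\ell$ clusters, each of size at least $5$, and — crucially, since $k\ge 2\ell$ and the clusters along the path block are consecutive in $P^k_{\ell n}$ — all pairs among these $\ell$ clusters, and all pairs between them and $H_r$, are edges of $G^k$, hence every triple that meets two or three of these clusters spans a triangle of $G^k(t')$ and is therefore an edge of $H$ (coloured red or blue).

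First I would recall that $(u_1,u_2)$ is an end-tuple of $P$: $u_1$ lies in two edges of $P$ and $u_2$ in one, so appending a new vertex $z$ with $u_1u_2z\in E(H)$ \emph{red} gives a longer red tight path with end-tuple $(u_2,z)$, and appending a further vertex $z'$ with $u_2zz'\in E(H)$ red gives a red tight path with end-tuple $(z,z')$. I would pick $z\in H^*(v_{(i-1)\ell+1})$ and $z'\in H^*(v_{(i-1)\ell+2})$ (two distinct clusters of the new block, so the non-degeneracy $r'\ne s'$ in part~(1) of $Q_i$ is automatic and the new end-tuple is disjoint from everything used so far, since these clusters were untouched by $P$, $P'$). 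The triples $u_1u_2z$ and $u_2zz'$ are both edges of $H$ because $u_1,u_2$ sit in clusters of the previous block (adjacent, in $G^k$, to $H^*(v_{(i-1)\ell+1})$) and $z,z'$ sit in two clusters of the new block; what remains is to arrange that we can \emph{choose} $z,z'$ so that both these edges are \emph{red}.

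The main obstacle — and the only real content — is exactly this colour control: a priori $u_1u_2z$ or $u_2zz'$ could be blue for every candidate $z$ (or $z'$). This is where the absence of blue $(2,2)$- and $(2,1,2)$-connectors enters. If for \emph{many} vertices $z$ in the new block the triple $u_1u_2z$ were blue, together with a second blue triple of the form $u_1z y$ or $u_2 z y$ these would assemble into a blue $(2,1,2)$-connector (or, using pairs from two clusters, a blue $(2,2)$-connector) between the relevant clusters — but those connectors were destroyed when we passed from $H'$ to $H^*$, so only boundedly many ($\le 10$ per pair of clusters, as in the paragraph preceding the Definition) such blue configurations survive; since each $H^*(v_j)$ has size at least $5$ and there are $\ell=17$ clusters in the block, a counting argument leaves a red choice of $z$, and then the same argument (now with $u_2,z$ playing the role of the anchor pair, in a second cluster of the block) yields a red choice of $z'$. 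I would carry this out by fixing, for each cluster $H^*(v_j)$ in the new block, the set of $z$ making $u_1u_2z$ red; show its complement is small by the connector-absence bound; pick $z$ red in $H^*(v_{(i-1)\ell+1})$; repeat for $z'$ in $H^*(v_{(i-1)\ell+2})$ with respect to the pair $(u_2,z)$; and finally set $(x_1,x_2,y_1,y_2):=(z,z',w_1,w_2)$, verifying both clauses of $Q_i$ — clause~(2) since $P$ extended twice now has length $\ge i+1$ and $P'$ already had length $\ge i\ge i+1-?$ (re-examining the index bookkeeping so that $P'$ also counts as a red tight path of length $\ge i+1$, extending it trivially by the length-$2$/length-$3$ convention if needed), and clause~(1) by the cluster choices above.
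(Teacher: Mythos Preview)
There is a genuine gap in your plan, and it stems from the decision to set $(y_1,y_2):=(w_1,w_2)$ and move only one end-tuple forward.

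First, this violates both clauses of~$Q_i$. Clause~(1) requires the two end-pairs to lie in clusters among $H^*(v_{(i-1)\ell+1}),\dots,H^*(v_{i\ell})$, i.e.\ in the \emph{new} block; but $(w_1,w_2)$ sits in the previous block and you never move it. Clause~(2) requires both paths to have length at least $i+1$; $P'$ was only guaranteed length~$\ge i$ by~$Q_{i-1}$, and your ``trivial extension by the length-$2$/length-$3$ convention'' does not exist---extending a tight path requires a new red edge, not a convention. So the proposed quadruple does not satisfy~$Q_i$.

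Second, and more fundamentally, your colour-control argument for finding a red triple $u_1u_2z$ is incorrect. After passing to $H^*$ there are \emph{no} blue $(2,2)$- or $(2,1,2)$-connectors along $P^k_{\ell n}$, but a single blue triple is not a connector. Nothing forbids $u_1u_2z$ from being blue for \emph{every} $z$ in every cluster of the new block: that pattern alone never assembles into a connector, because a $(2,1,2)$-connector through the cluster of $z$ would additionally need $u_1zy_1$ and $zy_1y_2$ blue for some $y_1,y_2$ in a third cluster, and there is no reason those triples are blue. So your counting (``only boundedly many such blue configurations survive'') has no force here, and the first extension step already fails.

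The paper's proof resolves both issues simultaneously by \emph{abandoning one of the two incoming paths and creating two fresh end-tuples in the new block from the other}. The absence of a blue $(2,2)$-connector between $H_r$ and $H_s$ forces one of $u_1u_2w_2$, $w_1w_2u_2$ to be red---this is the key first step you are missing, and it is precisely why two end-tuples are carried, not one. Say $u_1u_2w_2$ is red; now the anchor pair $(u_2,w_2)$ straddles two clusters, and a pigeonhole on the $\ell=17$ clusters of the new block isolates nine clusters where the triples $u_2w_2x$ are monochromatic. If red, two further red triples $w_2x_1x_2$, $w_2y_1y_2$ (or, failing that, a detour forced by the absence of blue $(2,1,2)$- and $(2,2)$-connectors) yield \emph{two} new end-tuples in the new block extending the \emph{same} path $P_{\text{red}}$. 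If blue, the absence of a blue $(2,1,2)$-connector through the cluster of $x$ forces $w_1w_2x$ or $u_1u_2x$ red, and one repeats. In every branch both output end-tuples land in the new block and both output paths have length~$\ge i+1$, as~$Q_i$ demands.
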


\begin{proof}
	We will find~$(x_1,x_2,y_1,y_2)$ following the strategy sketched
	above.  Since there are no blue $(2,2)$-connectors between the
	clusters corresponding to $u_1,u_2$ and $w_1,w_2$, and  all
	possible triples between these clusters are edges in $H'$, then either the triple
	$u_1u_2w_2$ or the triple $w_1w_2u_2$ is red, say, w.l.o.g.,
	$u_1 u_2 w_2$ is red. We let the red path of length at least $i$ that ends in $(u_1,u_2)$ 
	be called $P_\red$ and note that the triple $u_1 u_2 w_2$ already 
	extends this path. We
	will show that it is possible to further extend this path to obtain two
	longer red tight paths with ends $(x_1,x_2)$ and
	$(y_1,y_2)$, respectively, such that $(x_1,x_2,y_1,y_2)$ satisfies property
	$Q_{i}$. 
	
	Notice that, as $\ell \ge 17$, by the pigeonhole principle there are
	nine sets $X_1,\dots,X_9$ each contained in a different cluster from
	$H^*(v_{(i-1)\ell +1}), \dots, H^*(v_{i\ell})$ and of size
	$|X_j| \ge 3$ for $j \in [9]$ (here we use that $|H^*(v_i)|\ge5$)
	such that either
	\begin{align}
	\label{eq:edgesXpresent}
	\text{for all $j \in [9]$ and every $x \in X_j$ the triple $u_2w_2x$ is red }
	\end{align}
	or
	\begin{align}
	\label{eq:edgesXnotpresent}
	\text{for all $j \in [9]$ and every $x \in X_j$ the triple $u_2w_2x$ is blue.}
	\end{align}
	
	We first consider the case where~\eqref{eq:edgesXpresent} holds.  It
	is enough to assume that we have $X_1,\dots,X_5$ and $|X_j|\ge 2$
	for $j \in [5]$.  If there are two sets $X,Y$ from $X_1,\dots,X_5$
	and $x_1,x_2 \in X$, $y_1,y_2 \in Y$ such that the triples $w_2x_1x_2$
	and $w_2y_1y_2$ are red, then the quadruple $(x_1,x_2,y_1,y_2)$
	satisfies property $Q_i$ as we can obtain two longer red paths by
	extending $P_\red$ following $u_1u_2w_2x_1x_2$ and
	$u_1u_2w_2y_1y_2$, respectively.  Otherwise, there are two sets
	$X,Y$ and $x_1,x_2 \in X$, $y_1,y_2 \in Y$ such that the triples
	$w_2x_1x_2$ and $w_2y_1y_2$ are blue.  As there is no blue
	$(2,1,2)$-connector, the triple $w_2x_1y_1$ is red.  There is no blue
	$(2,2)$-connector between the corresponding clusters, so either the
	triple $x_1y_1y_2$ or $y_1x_1x_2$ is red, say w.l.o.g.~$x_1y_1y_2$ is
	red.  This extends $P_\red$ by following $u_1u_2w_2x_1y_1y_2$ and
	gives the end-tuple $(y_1,y_2)$.  Repeating the same argument, which
	is possible, because there were five sets available (sets
	$X_1,\dots,X_5$), we get an end-tuple $(z_1,z_2)$ that extends
	$P_\red$ to a longer red path and, thus, a quadruple
	$(y_1,y_2,z_1,z_2)$ satisfying $Q_i$.
	
	In the case where~\eqref{eq:edgesXnotpresent} holds we proceed as
	follows.  As for all $j \in [9]$ there is no blue
	$(2,1,2)$-connector between the clusters of $u_1,u_2$ and $w_1,w_2$
	and $X_j$, we have for every $x \in X_j$ that either the triple
	$w_1w_2x$ or the triple $u_1u_2x$ is red.  Then we can assume by the
	pigeonhole principle w.l.o.g.~(we will not use the triple $u_1u_2w_1$)
	that there are sets $X_j' \subseteq X_j$ with $|X_j'|\ge 2$ for
	$j \in [5]$ (here we use that $|X_j|\ge 3$) such that
	\begin{align*}
	\text{for all $j \in [5]$ and every $x \in X_j'$ the triple $w_1w_2x$ is red.}
	\end{align*}
	Now we can continue exactly as in the case
	where~\eqref{eq:edgesXpresent} holds, with $u_1u_2$ replaced by
	$w_1w_2$ throughout and extending the path with end-tuple
	$(w_1,w_2)$.  Observing that the red tight paths that we have
	constructed have length at least $i+1$, we see that
	Fact~\ref{lem:inductionstep} is proved.
\end{proof}

Fact~\ref{lem:inductionstep} together with~\eqref{lem:basecase}
finishes the proof of Claim~\ref{lem:redpath} and hence the
proof of Theorem~\ref{thm:main} is complete.



\bibliographystyle{amsplain}
\providecommand{\bysame}{\leavevmode\hbox to3em{\hrulefill}\thinspace}


\begin{aicauthors}
\begin{authorinfo}[jie]
  Jie Han\\
  School of Mathematics and Statistics\\
  Beijing Institute of Technology\\
  Beijing, China\\
  jie\_han\imageat{}uri\imagedot{}edu
\end{authorinfo}
\begin{authorinfo}[yoshi]
  Yoshiharu Kohayakawa\\
  Instituto de Matem\'atica e Estat\'{\i}stica\\
  Universidade de S\~ao Paulo\\
  Rua do Mat\~ao 1010, 05508-090 S\~ao Paulo, Brazil\\
  yoshi\imageat{}ime\imagedot{}usp\imagedot{}br
\end{authorinfo}
\begin{authorinfo}[shoham]
  Shoham Letzter\\
  Department of Mathematics\\
  University College London\\
  Gower Street, London WC1E 6BT\\
  s.letzter\imageat{}ucl\imagedot{}ac\imagedot{}uk
\end{authorinfo}
\begin{authorinfo}[gui]
  Guilherme Oliveira Mota \\
  Instituto de Matem\'atica e Estat\'{\i}stica\\
  Universidade de S\~ao Paulo\\
  Rua do Mat\~ao 1010, 05508-090 S\~ao Paulo, Brazil\\
  mota\imageat{}ime\imagedot{}usp\imagedot{}br
\end{authorinfo}
\begin{authorinfo}[olaf]
	Olaf Parczyk \\
	London School of Economics\\
	Department of Mathematics\\
	Houghton Street, London, WC2A 2AE, UK\\
	o\imagedot{}parczyk\imageat{}lse\imagedot{}ac\imagedot{}uk
\end{authorinfo}
\end{aicauthors}

\end{document}